\newtheorem{conjecture}{Conjecture}[section]
\newtheorem{proposition}[conjecture]{Proposition}
\newtheorem{theorem}[conjecture]{Theorem}
\newtheorem{corollary}[conjecture]{Corollary}
\theoremstyle{definition}
\newtheorem{example}[conjecture]{Example}
\newtheorem{definition}[conjecture]{Definition}
\title{Power residue symbols and the exponential local-global principle}
\author{Henry (Maya) Robert Thackeray (ORCID: 0000-0003-4467-9070)\\Department of Mathematics and Applied Mathematics,\\University of Pretoria, Pretoria, 0002 South Africa\\maya.thackeray@up.ac.za}
\date{}
\begin{document}
\maketitle
Preprint of an article submitted for consideration in International Journal of Number Theory \textcopyright{} 2025 [copyright World Scientific Publishing Company] \url{https://worldscientific.com/worldscinet/ijnt} (journal allows posting of preprints at any time in not-for-profit subject-based preprint servers; formatting of the current document is different from formatting of preprint submitted to the journal; copyright for preprint submitted to journal is transferred to World Scientific on acceptance for publication)

\mbox{}

\begin{abstract}
The exponential local-global principle, or Skolem conjecture, says: Suppose that \(b\) is a positive integer, and that the sequence \((u_{n})_{n = -\infty}^{\infty}\) is such that every term is in \(\mathbb{Z}[1/b]\), the linear recurrence \(u_{n + d} = a_{1}u_{n + d - 1} + \cdots + a_{d}u_{n}\) holds for all integers \(n\), and every root of \(x^{d} - a_{1}x^{d - 1} - a_{2}x^{d - 2} - \cdots - a_{d}\) is nonzero and simple; then there is no zero term \(u_{n}\) if and only if, for some integer \(m\) that is larger than \(1\) and relatively prime to \(b\), every term \(u_{n}\) is not in \(m\mathbb{Z}[1/b]\).

Particular cases of the conjecture are known, but the general conjecture is open. This paper proves some apparently new quadratic and degenerate cubic cases of the exponential local-global principle via power residue symbols.

This work was presented at the Stellenbosch Number Theory Conference 2025 in January 2025 at Stellenbosch University; much of the work was also presented at the 67th Annual Congress of the South African Mathematical Society in December 2024 at the University of Pretoria.
\end{abstract}

Keywords: Recurrence; local-global principle; power residue symbol; number theory.

Mathematics Subject Classification 2020: 11B37

\section{Introduction}

The exponential local-global principle, also called the Skolem conjecture, says (see \cite{Letal22}):

\begin{conjecture}[Exponential local-global principle]
Let \(a_{1}, \ldots, a_{d} \in \mathbb{Q}\) be such that \(a_{d} \neq 0\) and all roots of the polynomial \(f(x) = x^{d} - a_{1}x^{d - 1} - a_{2}x^{d - 2} - \cdots - a_{d}\) are simple. Suppose that \(b\) is a positive integer, and that the sequence \(u = (u_{n})_{n = -\infty}^{\infty}\) is such that every term is in \(\mathbb{Z}[1/b]\) and, for all \(n \in \mathbb{Z}\),
\[u_{n + d} = a_{1}u_{n + d - 1} + \cdots + a_{d}u_{n}.\]
There is no zero term \(u_{n}\) if and only if, for some integer \(m \geqslant 2\), we have \(\gcd(b, m) = 1\) and every term \(u_{n}\) is not in \(m\mathbb{Z}[1/b]\).\hfill\(\square\)
\end{conjecture}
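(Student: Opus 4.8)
The ``if'' direction needs no argument: if $u_{N} = 0$ for some $N$, then $0 \in m\mathbb{Z}[1/b]$ for every $m$, contradicting the supposed existence of such an $m$. So the whole content lies in the converse, and the plan below is for that; since the general statement is the open Skolem conjecture, the realistic target is to push the method through under the rigidity available in low degree.

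First I would pass to the algebraic closed form. Let $K$ be the splitting field of $f$ over $\mathbb{Q}$, let $\alpha_{1}, \ldots, \alpha_{d} \in K^{\times}$ be its (distinct, nonzero) roots, and solve the Vandermonde system on $d$ initial values to obtain $c_{1}, \ldots, c_{d} \in K$ with $u_{n} = \sum_{i=1}^{d} c_{i}\alpha_{i}^{n}$ for all $n \in \mathbb{Z}$. Replacing $b$ by a convenient multiple $b'$ is harmless, since $\gcd(m, b') = 1$ implies $\gcd(m, b) = 1$ and $u_{n} \notin m\mathbb{Z}[1/b']$ implies $u_{n} \notin m\mathbb{Z}[1/b]$, so I may assume every $\alpha_{i}$ and every $c_{i}$ is a unit away from $b'$. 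The task becomes: from ``$u_{n} \neq 0$ for every $n$'', produce an integer $m \geqslant 2$ coprime to $b'$ with $u_{n} \notin m\mathbb{Z}[1/b']$ for all $n$.

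Next I would look for $m$ below a well-chosen prime $\mathfrak{p}$ of $K$. Choose $\mathfrak{p}$ unramified over $\mathbb{Q}$, coprime to $b'$, at which every $\alpha_{i}$ and every $c_{i}$ is a unit; then the multiplicative orders of the $\alpha_{i}$ modulo $\mathfrak{p}$ are finite, so $(u_{n} \bmod \mathfrak{p})$ is purely periodic, say of period $N$. If $u_{n} \not\equiv 0 \pmod{\mathfrak{p}}$ for all $n$, then $m := p$, the rational prime under $\mathfrak{p}$, works and we are done. The real case is when $u_{n_{0}} \equiv 0 \pmod{\mathfrak{p}}$ for one or more residues $n_{0} \bmod N$. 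Here I would combine $\mathfrak{p}$-adic interpolation with power residue symbols: on the progression $n = n_{0} + Nt$ the value $u_{n_{0} + Nt}$ is a convergent $\mathfrak{p}$-adic power series in $t$ whose zeros are bounded by Strassmann's theorem, and the hypothesis forbids integer zeros, but the possible extra $p$-adic zeros must still be excluded. The new ingredient --- which I expect the quadratic and degenerate cubic cases to exploit --- is to normalise $u_{n}$ to a two-term shape, $u_{n} = (\text{unit})\cdot(c\gamma^{n} + 1)$ with $\gamma$ a ratio of roots in the quadratic case, and to first collapse a degenerate cubic to this shape using that some ratio of its roots is a root of unity; one then picks $m$ so that $\gamma$ is an $m$-th power in $K$ while $c$ (equivalently $-1/c$) is not, which pins the value of the $m$-th power residue symbol attached to $u_{n}$ to something incompatible with $u_{n} \in m\mathbb{Z}[1/b']$ for every $n$. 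Making that precise is an explicit computation with the power reciprocity law (Eisenstein/Kummer) in $K$, carried out along the finitely many residue classes above, with the finite- and infinite-place bookkeeping arranged so that a single $m$ serves all $n$ at once.

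The main obstacle is exactly the step of ruling out the ``spurious'' progressions on which $u_{n} \equiv 0 \pmod{\mathfrak{p}}$ (or modulo every naive candidate) even though $u_{n}$ is never genuinely $0$: controlling the interaction between the archimedean non-vanishing and the congruence conditions is, in full generality, the open conjecture itself. The power-residue-symbol mechanism only closes the gap when the multiplicative relations among the roots and the Galois action on them are rigid enough to force a uniform modulus $m$ --- which is why the method is expected to deliver quadratic recurrences and degenerate cubic ones rather than the general statement.
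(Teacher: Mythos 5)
The statement you are proving is the paper's Conjecture~1.1, which the paper itself presents as the \emph{open} Skolem conjecture; the paper offers no proof of it, only proofs of special cases (the $d=1$ case in Proposition~1.3, certain rational-root quadratic cases in Theorem~1.4, and degenerate cubics in the Corollary). Your proposal is candid about this: your closing paragraph concedes that the step of excluding the ``spurious'' zero progressions ``is, in full generality, the open conjecture itself.'' That concession is the gap, and it is not a small one --- it is the entire hard direction of the statement. A sketch that reduces the claim to the claim is not a proof, so as an argument for the stated Conjecture this cannot be accepted; at best it is a correct proof of the trivial ``if'' direction together with a research plan for the converse.

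On the substance of the plan, for the cases the paper actually handles your instinct about power residue symbols is the right one, but the mechanism differs from the paper's. You propose $\mathfrak{p}$-adic interpolation and Strassmann's theorem along residue classes; the paper uses neither. Instead, for $d=2$ with distinct rational roots it reduces everything to the single congruence $C^{n} \equiv B \pmod{p}$ with $C = c_{1}/c_{2}$ and $B = -b_{2}/b_{1}$, and then invokes Schinzel's theorem on prescribing $n$th power residue symbols (a Chebotarev-type result) to produce \emph{infinitely many} primes $r$ with $\left(\frac{C}{r}\right)_{n} = 1$ while $\left(\frac{B}{r}\right)_{n}$ is a primitive $n$th root of unity --- which is essentially your ``make $\gamma$ an $m$th power but not $c$'' idea made precise. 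The hypothesis that makes this work is exactly a multiplicative independence condition, $B^{k}C^{\ell} \notin \{\pm 1\}$ for all $k \neq 0$, and its necessity is why even the quadratic case is only partially resolved. If you want to salvage something provable from your write-up, you should restate your target as one of these conditional special cases rather than as the Conjecture itself.
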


If there is a zero term in \(u\), then the condition about \(m\) must clearly fail. The interesting part of the conjecture is the inverse -- the statement that if there is no zero term in \(u\), then the condition about \(m\) holds. The conjecture is open in general.

The assumption that the roots of \(f(x)\) are simple is needed, as the following example illustrates.

\begin{example}
(See \cite{Letal22}.) Consider the sequence \(u\) in \(\mathbb{Z}[1/2]\) given by
\[u_{n} = (2n + 1)2^{n} \textrm{ for } n \in \mathbb{Z}.\]
We have \(u_{n + 2} = 4u_{n + 1} - 4u_{n}\) for \(n \in \mathbb{Z}\), and
\[f(x) = x^{2} - 4x + 4 = (x - 2)^{2}.\]
No term of \(u\) is zero. However, for each odd positive integer \(m\), we have \(m \mid u_{n}\) for every integer \(n\) of the form \(n = km + (m - 1)/2\) with \(k \in \mathbb{Z}\).\hfill\(\square\)
\end{example}

For each prime \(p\), write \(v_{p}(x)\) for the \(p\)-adic valuation of a nonzero rational number \(x\) (for example, \(v_{p}(p^{n}) = n\) for \(n \in \mathbb{Z}\)). We say that a prime \(p\) is \emph{coprime} to a nonzero rational number \(x\) if and only if \(v_{p}(x) = 0\).

It is straightforward to prove the exponential local-global principle in the linear case, that is, in the case where \(d = 1\).

\begin{proposition}
The exponential local-global principle holds in the case where \(d = 1\).
\end{proposition}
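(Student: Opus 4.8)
\section*{Proof proposal}

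The plan is to use the explicit closed form of a first-order linear recurrent sequence together with \(p\)-adic valuations. When \(d = 1\) the recurrence reads \(u_{n+1} = a_1 u_n\) with \(a_1 = a_d \neq 0\), so by induction in both directions \(u_n = a_1^{n} u_0\) for every \(n \in \mathbb{Z}\). The ``only if'' direction — that a zero term forces the \(m\)-condition to fail — is already noted in the introduction, and if \(u_0 = 0\) then every \(u_n\) vanishes, so in that case both sides of the equivalence are false. Hence I may assume \(u_0 \neq 0\), which (as \(a_1 \neq 0\)) is precisely the statement that \(u\) has no zero term, and my task reduces to producing the required modulus \(m\).

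First I would record, for each prime \(p\) not dividing \(b\), the valuation identity \(v_p(u_n) = v_p(u_0) + n\,v_p(a_1)\), valid for all \(n \in \mathbb{Z}\). Since every \(u_n\) lies in \(\mathbb{Z}[1/b]\), the left-hand side is \(\geqslant 0\) for every \(n\); a function of \(n\) of the form \(c + n e\) that is bounded below on all of \(\mathbb{Z}\) must have \(e = 0\), so \(v_p(a_1) = 0\) and therefore \(v_p(u_n) = v_p(u_0)\) for every \(n\). In other words, away from the primes dividing \(b\) the whole sequence has the same valuations as \(u_0\) (equivalently, \(a_1\) is a unit in \(\mathbb{Z}[1/b]\), so multiplication by \(a_1^{n}\) preserves each ideal \(m\mathbb{Z}[1/b]\)).

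Next, because \(u_0\) is a nonzero rational number, the set of primes \(p\) with \(v_p(u_0) \neq 0\) is finite, so I can choose a prime \(p\) with \(p \nmid b\) and \(v_p(u_0) = 0\). Setting \(m = p\) gives an integer \(\geqslant 2\) with \(\gcd(b, m) = 1\), and for every \(n\) we have \(v_p(u_n) = v_p(u_0) = 0\), so \(u_n \notin p\mathbb{Z}[1/b]\). This exhibits the modulus demanded by the conjecture and completes the proof.

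There is no real obstacle here — the statement is elementary — so the only points requiring care are bookkeeping: treating the degenerate case \(u_0 = 0\) separately, applying the ``bounded-below affine function is constant'' step over all of \(\mathbb{Z}\) (both positive and negative \(n\), which is where the two-sided recurrence matters), and confirming that a prime \(p \nmid b\) with \(v_p(u_0) = 0\) genuinely exists. None of these should cause difficulty.
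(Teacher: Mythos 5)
Your proof is correct and follows essentially the same route as the paper: write \(u_n = a_1^n u_0\), dispose of the case \(u_0 = 0\), and then choose a prime \(p\) avoiding a finite bad set so that \(v_p(u_n) = 0\) for all \(n\). The only (minor) difference is that the paper simply picks \(p\) coprime to \(a_1\), \(u_0\), and \(b\), whereas you observe that the two-sided membership \(u_n \in \mathbb{Z}[1/b]\) already forces \(v_p(a_1) = 0\) for every \(p \nmid b\), so coprimality to \(a_1\) need not be imposed separately --- a correct refinement, but not a different argument.
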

\begin{proof}
Consider a sequence \(u\) of numbers in \(\mathbb{Z}[1/b]\) where, for some \(a_{1} \in \mathbb{Q} - \{0\}\), we have \(u_{n + 1} = a_{1}u_{n}\) for \(n \in \mathbb{Z}\). Clearly, \(u_{n} = u_{0}a_{1}^{n}\) for all \(n \in \mathbb{Z}\). If \(u_{0} = 0\), then all terms of \(u\) are zero and the conjecture holds. If \(u_{0} \neq 0\), then no term of \(u\) is zero and each prime \(p\) that is coprime to both \(a_{1}\) and \(u_{0}\) satisfies \(u_{n} \not\equiv 0 \pmod{p}\) for all \(n \in \mathbb{Z}\), so the conjecture holds (and all but finitely many primes work as choices for \(m\)).
\end{proof}

The major goal of this paper is to prove a strengthening of the exponential local-global principle in many cases where \(d = 2\) and \(f(x)\) has two distinct rational roots. From this, some degenerate cases where \(d = 3\) will quickly follow.

This work was presented at the Stellenbosch Number Theory Conference 2025 in January 2025 at Stellenbosch University; much of the work was also presented at the 67th Annual Congress of the South African Mathematical Society in December 2024 at the University of Pretoria. This paper covers the first year of a planned three-year research project associated with the University of Pretoria's Research Development Programme; for the second and third years of the project, the aim is to prove the exponential local-global principle in other cases (including families of cases where \(f(x)\) is cubic or quartic), and to examine whether and when the principle holds if algebraic number fields other than \(\mathbb{Q}\) are considered.

In this article, from now on until the section on degenerate cubic cases, we assume that \(d = 2\), we assume that the numbers \(a_{1}, a_{2}\) in \(\mathbb{Q}\) are such that \(a_{2} \neq 0\) and \(a_{1}^{2} + 4a_{2} \neq 0\), we let \(b\) be a positive integer, we consider a sequence \(u = (u_{n})_{n = -\infty}^{\infty}\) of numbers in \(\mathbb{Z}[1/b]\) such that
\[u_{n + 2} = a_{1}u_{n + 1} + a_{2}u_{n}\]
for all \(n \in \mathbb{Z}\), and we assume that the roots \(c_{1}\) and \(c_{2}\) of \(f(x) = x^{2} - a_{1}x - a_{2}\) are rational. By Viete's formulas, \(c_{1}c_{2} = -a_{2} \neq 0\). The condition \(a_{1}^{2} + 4a_{2} \neq 0\) yields \(c_{1} \neq c_{2}\), so by a well known result, there are \(b_{1}, b_{2} \in \mathbb{Q}\) such that, for \(n \in \mathbb{Z}\), we have \(u_{n} = b_{1}c_{1}^{n} + b_{2}c_{2}^{n}\). If \(0 \in \{b_{1}, b_{2}\}\) then \(u \equiv 0\) or we are in a \(d = 1\) case; from now on, let us assume \(b_{1} \neq 0\) and \(b_{2} \neq 0\). We write \(B = -b_{2}/b_{1}\) and \(C = c_{1}/c_{2}\).

Under those assumptions, we shall prove the following main theorem.

\begin{theorem}\label{thmain}
Suppose that
\begin{itemize}
\item[(1)] We have \(C^{m} = B\) for some \(m \in \mathbb{Z}\), or
\item[(2)] We have \((C, B) = (1, -1)\), or
\item[(3)] We have \(B^{k}C^{\ell} \notin \{\pm 1\}\) for all \(k \in \mathbb{Z} - \{0\}\) and all \(\ell \in \mathbb{Z}\).
\end{itemize}
There is no zero term in \(u\) if and only if, for infinitely many primes \(p\), we have \(\gcd(b, p) = 1\) and every term \(u_{n}\) is not divisible by \(p\).\hfill\(\square\)
\end{theorem}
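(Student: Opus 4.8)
The strategy is to reduce the statement to a question about $q$-th power residues and then invoke the Chebotarev density theorem (equivalently, $q$-th power reciprocity in $\mathbb{Z}[\zeta_{q}]$ together with Dirichlet's theorem on primes in arithmetic progressions). Since $b_{2} = -Bb_{1}$, we have $u_{n} = b_{1}c_{2}^{\,n}(C^{n} - B)$ for every $n$, so $u_{n} = 0$ if and only if $C^{n} = B$; in particular the ``if'' direction of the theorem is immediate, because a zero term lies in $p\mathbb{Z}[1/b]$ for every prime $p$. Call a prime $p$ \emph{good} if $p \nmid b$, if $v_{p}$ vanishes on $b_{1}, b_{2}, c_{1}, c_{2}$, and (when $C = -1$) if moreover $\overline{B} \neq \pm 1$ in $\mathbb{F}_{p}$; all but finitely many primes are good. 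For good $p$ one has $v_{p}(u_{n}) = v_{p}(C^{n} - B) \geqslant 0$, positive exactly when $\overline{C}^{\,n} = \overline{B}$, so ``$p \nmid u_{n}$ for all $n$'' is equivalent to ``$\overline{B} \notin \langle\overline{C}\rangle$ in $\mathbb{F}_{p}^{\times}$''. Hence, for the nontrivial direction, it suffices to produce infinitely many good $p$ with $\overline{B} \notin \langle\overline{C}\rangle$. In case (1) the relation $C^{m} = B$ yields the zero term $u_{m} = 0$, so both sides of the biconditional are false; in case (2) (which, as $c_{1} \neq c_{2}$ forces $C \neq 1$, does not actually occur, but would give the nonzero geometric sequence $u_{n} = 2b_{1}c_{2}^{\,n}$) the claim is clear. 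So assume (3). Putting $\ell = 0$ gives $B \neq \pm 1$, and $C \neq 1$ by hypothesis; and if $C = -1$ then for good $p$ we have $\overline{C}^{\,n} = \pm 1 \neq \overline{B}$, so every good $p$ works. From now on suppose $C \neq \pm 1$.

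Write $B = \pm\prod_{i} p_{i}^{e_{i}}$ and $C = \pm\prod_{i} p_{i}^{f_{i}}$ with the $p_{i}$ distinct primes. Condition (3) with $C \neq \pm 1$ excludes every relation $B^{k}C^{\ell} = \pm 1$ with $(k, \ell) \neq (0, 0)$ (for $k \neq 0$ this is (3); $k = 0$ would force $C^{\ell} = \pm 1$, hence $\ell = 0$), so the integer vectors $(e_{i})_{i}$ and $(f_{i})_{i}$ are $\mathbb{Q}$-linearly independent; fix indices $i_{1}, i_{2}$ with $\delta := e_{i_{1}}f_{i_{2}} - e_{i_{2}}f_{i_{1}} \neq 0$, and pick a prime $q$ larger than $|\delta|$ and than every $|e_{i}|, |f_{i}|$ (so $q$ is odd and coprime to $\delta$ and to each nonzero $e_{i}, f_{i}$). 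If $BC^{-j} \in (\mathbb{Q}^{\times})^{q}$ for some integer $j$, then $e_{i} \equiv jf_{i} \pmod{q}$ for all $i$, and taking $i \in \{i_{1}, i_{2}\}$ forces $\delta \equiv 0 \pmod{q}$, a contradiction; thus $\overline{B} \notin \langle\overline{C}\rangle$ in $\mathbb{Q}^{\times}/(\mathbb{Q}^{\times})^{q}$. Set $F = \mathbb{Q}(\zeta_{q})$ and $K_{1} = F(C^{1/q})$, $K_{2} = F(B^{1/q})$, both Galois over $\mathbb{Q}$. Since $\gcd([F:\mathbb{Q}], q) = \gcd(q-1, q) = 1$, a rational number that is a $q$-th power in $F$ is already a $q$-th power in $\mathbb{Q}$ (if $x = \alpha^{q}$ with $\alpha \in F$ then $x^{q-1} = N_{F/\mathbb{Q}}(x) = N_{F/\mathbb{Q}}(\alpha)^{q} \in (\mathbb{Q}^{\times})^{q}$, whence $x \in (\mathbb{Q}^{\times})^{q}$); combining this with Kummer theory over $F$, the inclusion $K_{2} \subseteq K_{1}$ would force $BC^{-j} \in (\mathbb{Q}^{\times})^{q}$ for some $j$, which we have just ruled out. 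Hence $K_{2} \not\subseteq K_{1}$.

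Finally, let $L = K_{1}K_{2}$, $G = \operatorname{Gal}(L/\mathbb{Q})$ and $H_{i} = \operatorname{Gal}(L/K_{i})$. Each $H_{i}$ is normal in $G$ because $K_{i}/\mathbb{Q}$ is Galois, and $H_{1} \not\subseteq H_{2}$ because $K_{2} \not\subseteq K_{1}$; being an intersection/difference of normal subgroups, $H_{1} \setminus H_{2}$ is a nonempty union of conjugacy classes. By the Chebotarev density theorem, infinitely many primes $p$ have $\operatorname{Frob}_{p}$ equal to a fixed conjugacy class contained in $H_{1} \setminus H_{2}$; discarding the finitely many ramified or non-good ones, each remaining $p$ splits completely in $K_{1}$ — so $p \equiv 1 \pmod{q}$ and $\overline{C} \in (\mathbb{F}_{p}^{\times})^{q}$ — but not in $K_{2}$ — so, as $\zeta_{q} \in \mathbb{F}_{p}$, $\overline{B} \notin (\mathbb{F}_{p}^{\times})^{q}$. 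For such $p$ we have $\langle\overline{C}\rangle \subseteq (\mathbb{F}_{p}^{\times})^{q}$ while $\overline{B} \notin (\mathbb{F}_{p}^{\times})^{q}$, so $\overline{B} \notin \langle\overline{C}\rangle$; there are infinitely many such $p$, which (with the ``if'' direction already noted) establishes the biconditional. One can instead dispense with Chebotarev: express ``$\overline{C}$ is a $q$-th power mod $p$'' and ``$\overline{B}$ is not'' through the $q$-th power residue symbol, convert these via $q$-th power reciprocity in $\mathbb{Z}[\zeta_{q}]$ into splitting and congruence conditions on $p$, and conclude by Dirichlet's theorem — this is the ``power residue symbol'' approach named in the title.

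The step I expect to be the main obstacle is the transition, under hypothesis (3), from the elementary fact ``$\overline{B} \notin \langle\overline{C}\rangle$ modulo $q$-th powers in $\mathbb{Q}^{\times}$'' to the field-theoretic statement $K_{2} \not\subseteq K_{1}$: it is here that condition (3) is genuinely used, that the norm descent from $F = \mathbb{Q}(\zeta_{q})$ to $\mathbb{Q}$ enters, and that the degenerate possibilities $C = \pm 1$ and $B = \pm 1$ must be peeled off. The remaining ingredients — the Chebotarev (or reciprocity-and-Dirichlet) argument, the bookkeeping of the finitely many excluded primes, and cases (1) and (2) — are routine.
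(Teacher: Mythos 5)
Your proof is correct, and it drives at the same target as the paper --- infinitely many primes $r$ modulo which $C$ is a $q$th power residue while $B$ is not, whence $C^{k}\not\equiv B\pmod{r}$ for all $k$ --- but by a technically different route. The paper quotes a theorem of Schinzel (Acta Arithmetica \textbf{32} (1977), Theorem 4) that lets one prescribe the values of the $n$th power residue symbols $\left(\frac{p_{i}}{r}\right)_{n}$ at the finitely many primes $p_{i}$ dividing $B$ and $C$; the linear algebra you perform with the exponent vectors $(e_{i})$, $(f_{i})$ and the minor $\delta$ appears there too, as a rank condition on the valuation matrix of $(C,B)$ used to choose symbol values making $\left(\frac{C}{r}\right)_{n}=1$ while $\left(\frac{B}{r}\right)_{n}$ is a primitive $n$th root of unity, and the paper's choice of the auxiliary prime $n$ coprime to $2d$ (its determinant) matches your choice of $q$ coprime to $2\delta$. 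You bypass Schinzel's theorem entirely: you apply Chebotarev directly to the compositum of the Kummer extensions $\mathbb{Q}(\zeta_{q},C^{1/q})$ and $\mathbb{Q}(\zeta_{q},B^{1/q})$, using the norm-descent fact $\mathbb{Q}^{\times}\cap(F^{\times})^{q}=(\mathbb{Q}^{\times})^{q}$ to show the second field is not contained in the first. This is self-contained and needs only the weaker input that $B\notin\langle C\rangle$ modulo $q$th powers in $\mathbb{Q}^{\times}$, rather than prescribing symbols at all the individual prime factors, at the cost of reproving a special case of what Schinzel's theorem packages. Your side observations --- that case (2) is vacuous under the standing hypothesis $c_{1}\neq c_{2}$, and that $C=-1$ can be dispatched by hand --- are sound; the paper instead absorbs the $C=\pm 1$ situation into the ``top row of $M$ is zero'' branch of the same symbol-prescription argument.
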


To prove Theorem \ref{thmain}, we shall use a Chebotarev-density-like result of Schinzel \cite{S77} on power residue symbols. Unlike some articles of Schinzel \cite{S77, S03} that proved some quadratic cases of the exponential local-global principle, the number \(a_{2}\) is not necessarily \(\pm 1\) in our work; Theorem \ref{thmain} is apparently a new result.

\section{Initial Results}

We have \(B = -b_{2}/b_{1} \in \mathbb{Q} - \{0\}\) and \(C = c_{1}/c_{2} \in \mathbb{Q} - \{0\}\). For each \(n \in \mathbb{Z}\), we have
\[u_{n} = 0 \Leftrightarrow b_{1}c_{1}^{n} = -b_{2}c_{2}^{n} \Leftrightarrow \left(\frac{c_{1}}{c_{2}}\right)^{n} = \frac{-b_{2}}{b_{1}} \Leftrightarrow C^{n} = B\]
and, for every prime \(p\) coprime to all of \(b\), \(b_{1}\), \(b_{2}\), \(c_{1}\), and \(c_{2}\) simultaneously, we have
\[u_{n} \equiv 0 \textrm{ (mod \(p\))} \Leftrightarrow b_{1}c_{1}^{n} \equiv -b_{2}c_{2}^{n} \textrm{ (mod \(p\))}\]
\[\Leftrightarrow \left(\frac{c_{1}}{c_{2}}\right)^{n} \equiv \frac{-b_{2}}{b_{1}} \textrm{ (mod \(p\))} \Leftrightarrow C^{n} \equiv B \textrm{ (mod \(p\))}.\]

Cases (1) and (2) of Theorem \ref{thmain} are straightforward. In case (1), we have \(C^{m} = B\) for some \(m \in \mathbb{Z}\), so \(u_{m} = 0\) and Theorem \ref{thmain} holds. In case (2), we have \((C, B) = (1, -1)\), so \(C^{n} \neq B\) for all \(n \in \mathbb{Z}\) and, for every odd prime \(p\), \(C^{n} \not\equiv B\) (mod \(p\)), so Theorem \ref{thmain} holds.

From now on, assume that we are in case (3); that is, assume that \(B^{k}C^{\ell} \notin \{\pm 1\}\) for all \(k \in \mathbb{Z} - \{0\}\) and all \(\ell \in \mathbb{Z}\). We must show \(C^{n} \not \equiv B \pmod{p}\) for infinitely many primes \(p\). To prepare for the proof of this case, we recall some background on power residue symbols.

\section{Power Residue Symbols}

(See \cite[pp.\@ 244--248]{M20} and \cite[exercise 2, pp.\@ 351--354]{CF10}.) Consider a number field \(K\) and a positive integer \(n\) with \(K \supseteq \mu_{n}\).

\begin{definition}[Power residue symbols]
Let \(a \in K - \{0\}\). Let \(S(a)\) be the set of archimedean places and prime ideals \(\mathfrak{p}\) such that \(\mathrm{ord}_{\mathfrak{p}}(n) \neq 0\) or \(\mathrm{ord}_{\mathfrak{p}}(a) \neq 0\). We define the \emph{\(n\)th power residue symbols} \(\left(\frac{a}{\mathfrak{b}}\right)_{n}\) and \(\left(\frac{a}{b}\right)_{n}\) as follows.
\begin{itemize}
\item[(1)] For prime ideals \(\mathfrak{p}\) such that \(\mathrm{ord}_{\mathfrak{p}}(n) = \mathrm{ord}_{\mathfrak{p}}(a) = 0\), let \(\left(\frac{a}{\mathfrak{p}}\right)_{n}\) be the \(n\)th root of 1 such that \(\left(\frac{a}{\mathfrak{p}}\right)_{n} \equiv a^{(N\mathfrak{p} - 1)/n}\) mod \(\mathfrak{p}\).
\item[(2)] Let \(\left(\frac{a}{\prod_{j}\mathfrak{p}_{j}^{c_{j}}}\right)_{n} = \prod_{j}\left(\frac{a}{\mathfrak{p}_{j}}\right)_{n}^{c_{j}}\) for integers \(c_{j}\) (for finite products).
\item[(3)] For \(b \in K - \{0\}\), let \(\left(\frac{a}{b}\right)_{n} = \left(\frac{a}{\langle{}b\rangle^{S(a)}}\right)_{n}\), where \(\langle{}b\rangle^{S(a)}\) is the ideal obtained from the prime-ideal factorisation of \(\langle{}b\rangle\) by removing all powers of prime ideals \(\mathfrak{p}\) such that \(\mathfrak{p} \in S(a)\).\hfill\(\square\)
\end{itemize}
\end{definition}

The following result of Schinzel says that, in a certain sense, we can choose the values of finitely many power residue symbols.
\begin{theorem}
\cite[Theorem 4, p.\@ 246]{S77} Let \(n\) be an odd prime. Let some numbers \(a_{1}, \ldots, a_{k}\) in \(\mathbb{Q}\) satisfy:
\begin{center}
For \(x_{1}, \ldots, x_{k} \in \mathbb{Z}\) and \(r \in \mathbb{Q}\), if \(a_{1}^{x_{1}}\cdots{}a_{k}^{x_{k}} = r^{n}\),

then \(n\) divides each of \(x_{1}, \ldots, x_{k}\).
\end{center}
It follows that for \(c_{1}, \ldots, c_{k} \in \mathbb{Z}\), there are infinitely many primes \(p\) such that, for \(i \in \{1, \ldots, k\}\), we have
\[\left(\frac{a_{i}}{p}\right)_{n} = \zeta_{n}^{c_{i}}.\]
In particular, this is the case if \(a_{1}, \ldots, a_{k}\) are distinct primes.\hfill\(\square\)
\end{theorem}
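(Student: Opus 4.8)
The plan is to interpret the power residue symbol as a Frobenius element and then combine Kummer theory with the Chebotarev density theorem; the only nonformal input is a descent argument that upgrades the hypothesis from \(\mathbb{Q}\) to \(K = \mathbb{Q}(\mu_{n})\). Set \(K = \mathbb{Q}(\mu_{n})\) (so that \(\mu_{n} \subseteq K\), as required) and \(L = K(\sqrt[n]{a_{1}}, \ldots, \sqrt[n]{a_{k}})\). For a prime \(\mathfrak{p}\) of \(K\) with \(\mathrm{ord}_{\mathfrak{p}}(n\,a_{1}\cdots a_{k}) = 0\), the defining congruence \(\left(\frac{a_{i}}{\mathfrak{p}}\right)_{n} \equiv a_{i}^{(N\mathfrak{p} - 1)/n} \pmod{\mathfrak{p}}\) says exactly that Frobenius acts by \(\mathrm{Frob}_{\mathfrak{p}}(\sqrt[n]{a_{i}}) = \left(\frac{a_{i}}{\mathfrak{p}}\right)_{n}\sqrt[n]{a_{i}}\), since \(\mathrm{Frob}_{\mathfrak{p}}(x) \equiv x^{N\mathfrak{p}} \pmod{\mathfrak{P}}\) for a prime \(\mathfrak{P}\) of \(L\) above \(\mathfrak{p}\). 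Thus prescribing the \(k\) symbols amounts to prescribing \(\mathrm{Frob}_{\mathfrak{p}}\) inside \(\mathrm{Gal}(L/K)\), a task for Chebotarev provided the target is actually attained by some automorphism.

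The crux is therefore to show \(\mathrm{Gal}(L/K) \cong (\mathbb{Z}/n\mathbb{Z})^{k}\). By Kummer theory this holds if and only if \(a_{1}, \ldots, a_{k}\) are independent modulo \(n\)th powers in \(K^{\times}\), and then the pairing \(\sigma \mapsto \big(\sigma(\sqrt[n]{a_{i}})/\sqrt[n]{a_{i}}\big)_{i}\) identifies \(\mathrm{Gal}(L/K)\) with \(\mu_{n}^{k}\), so every target vector \((\zeta_{n}^{c_{1}}, \ldots, \zeta_{n}^{c_{k}})\) is realised by a unique \(\sigma\). The hypothesis only supplies independence modulo \(n\)th powers in \(\mathbb{Q}^{\times}\), so I would first prove the descent lemma: if \(\alpha \in \mathbb{Q}^{\times}\) is an \(n\)th power in \(K^{\times}\), then \(\alpha\) is already an \(n\)th power in \(\mathbb{Q}^{\times}\). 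This uses \(\gcd(n, [K:\mathbb{Q}]) = \gcd(n, n-1) = 1\): from \(\alpha = \beta^{n}\) with \(\beta \in K^{\times}\) one gets \(\alpha^{n-1} = N_{K/\mathbb{Q}}(\alpha) = N_{K/\mathbb{Q}}(\beta)^{n}\), and writing \(1 = sn + t(n-1)\) for suitable integers \(s, t\) gives \(\alpha = (\alpha^{s}\,N_{K/\mathbb{Q}}(\beta)^{t})^{n}\) with \(\alpha^{s}N_{K/\mathbb{Q}}(\beta)^{t} \in \mathbb{Q}^{\times}\). Applying this to \(\alpha = a_{1}^{x_{1}}\cdots a_{k}^{x_{k}}\) converts any relation \(a_{1}^{x_{1}}\cdots a_{k}^{x_{k}} = \beta^{n}\) over \(K\) into one over \(\mathbb{Q}\), whence the hypothesis forces \(n \mid x_{i}\) for all \(i\); this is exactly the required \(K\)-independence.

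With the target automorphism \(\sigma \in \mathrm{Gal}(L/K)\) in hand, I would finish by the Chebotarev density theorem for the abelian extension \(L/K\): there are infinitely many primes \(\mathfrak{p}\) of \(K\) with \(\mathrm{Frob}_{\mathfrak{p}} = \sigma\), and discarding the finitely many dividing \(n\,a_{1}\cdots a_{k}\), each remaining \(\mathfrak{p}\) satisfies \(\left(\frac{a_{i}}{\mathfrak{p}}\right)_{n} = \zeta_{n}^{c_{i}}\) for all \(i\) by the Frobenius description. (If one prefers the symbols indexed by rational primes \(p \equiv 1 \pmod{n}\), one restricts to the degree-one primes \(\mathfrak{p}\), which have density one among all primes of \(K\), and selects a prime above \(p\) compatibly.) The final ``in particular'' clause is then immediate, since distinct rational primes \(a_{1}, \ldots, a_{k}\) satisfy the hypothesis: \(a_{1}^{x_{1}}\cdots a_{k}^{x_{k}} = r^{n}\) forces \(x_{i} = n\,v_{a_{i}}(r)\) upon comparing \(a_{i}\)-adic valuations.

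I expect the descent lemma to be the main obstacle in spirit, even though it is short, because it is the single point where the arithmetic of \(K\)---specifically the coprimality \(\gcd(n, n-1) = 1\)---is indispensable: without it the Kummer group could collapse and the prescribed Chebotarev class could be empty, so the whole argument would fail. The remaining ingredients---the Frobenius reading of the symbol, Kummer theory, and Chebotarev---are standard and need only routine verification of the unramifiedness and coprimality conditions.
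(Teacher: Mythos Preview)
The paper does not supply its own proof of this statement: it is quoted from Schinzel \cite{S77} and accompanied only by the remark that ``Schinzel's proof uses the Chebotarev density theorem.'' Your argument---Kummer theory for \(L = K(\sqrt[n]{a_{1}},\ldots,\sqrt[n]{a_{k}})\) over \(K = \mathbb{Q}(\zeta_{n})\), the norm-based descent showing that \(n\)th-power independence over \(\mathbb{Q}\) persists over \(K\) (using \(\gcd(n,[K:\mathbb{Q}]) = \gcd(n,n-1)=1\)), and then Chebotarev to realise the prescribed Frobenius---is correct and is exactly Schinzel's method, so there is nothing further to compare.
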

The paper of Schinzel \cite{S77} generalizes this to an arbitrary algebraic number field; we use only the case where the number field is \(\mathbb{Q}\). Schinzel's proof uses the Chebotarev density theorem.

\section{Proof of Remaining Part of Main Theorem}

We now return to the proof of Theorem \ref{thmain}. The part that we still need to prove is case (3): Under the assumptions that \(C \in \mathbb{Q} - \{0\}\), \(B \in \mathbb{Q} - \{0, 1\}\), \((C, B) \neq (1, -1)\), and \(B^{k}C^{\ell} \notin \{\pm 1\}\) for all \(k \in \mathbb{Z} - \{0\}\) and all \(\ell \in \mathbb{Z}\), we need to show that there is an infinite set \(S\) of primes, all coprime to \(B\) and to \(C\) simultaneously, such that, for all \(r\) in \(S\) and all \(k\) in \(\mathbb{Z}\), we have \(C^{k} \not\equiv B \pmod{r}\).

Let \(P\) be the set of all primes. Let \(T = \{p_{1}, \ldots, p_{m}\}\) be the set of all primes \(p\) such that
\[(v_{p}(b), v_{p}(b_{1}), v_{p}(b_{2}), v_{p}(c_{1}), v_{p}(c_{2})) \neq (0, 0, 0, 0, 0).\]
Let \(M\) be the matrix
\[\left(\begin{array}{cccc}
v_{p_{1}}(C) & v_{p_{2}}(C) & \cdots & v_{p_{m}}(C)\\
v_{p_{1}}(B) & v_{p_{2}}(B) & \cdots & v_{p_{m}}(B)
\end{array}\right) = \left(\begin{array}{c}
v_{p}(C)\\
v_{p}(B)
\end{array}\right)_{p \in T}.\]

Since \(B^{k}C^{\ell} \notin \{\pm 1\}\) for all \(k \in \mathbb{Z} - \{0\}\) and all \(\ell \in \mathbb{Z}\), it follows that
\begin{itemize}
\item[(1)] The top row of \(M\) is a zero row, or
\item[(2)] The matrix \(M\) has rank 2.
\end{itemize}

Therefore, at least one of the following two conditions holds.
\begin{itemize}
\item[(1)] Some prime \(p'\) is such that \(v_{p'}(C) = 0\) and \(v_{p'}(B) \neq 0\).
\item[(2)] Some primes \(p'\) and \(q'\) are such that
\[0 \neq d := \det\left(\begin{array}{cc}
v_{p'}(C) & v_{q'}(C)\\
v_{p'}(B) & v_{q'}(B)
\end{array}\right).\]
\end{itemize}
In the following argument, wherever a list with items (1) and (2) occurs, each item in the list applies in the case where the corresponding condition above is true.

Choose a prime \(n\) such that
\begin{itemize}
\item[(1)] \(\gcd(n, 2v_{p'}(B)) = 1\).
\item[(2)] \(\gcd(n, 2d) = 1\).
\end{itemize}
We consider the \(n\)th power residue symbol for \(\mathbb{Q}(\zeta_{n})\).

Let \(S\) be the set of primes \(r\) in \(P - T - \{n\}\) such that:
\begin{itemize}
\item[(1)] We have \(\left(\frac{p}{r}\right)_{n} = 1\) for \(p \in T - \{p'\}\) and \(\left(\frac{p'}{r}\right)_{n} = \zeta_{n}\).
\item[(2)] We have \(\left(\frac{p}{r}\right)_{n} = 1\) for \(p \in T - \{p', q'\}\), \(\left(\frac{p'}{r}\right)_{n} = \zeta_{n}^{a}\), and \(\left(\frac{q'}{r}\right)_{n} = \zeta_{n}^{b}\), where the numbers \(a, b \in \{0, 1, \ldots, n - 1\}\) are fixed so that
\[\begin{array}{r}
av_{p'}(C) + bv_{q'}(C) \equiv 0 \pmod{n}\\
\textrm{and } av_{p'}(B) + bv_{q'}(B) \equiv 1 \pmod{n}
\end{array}\]
(such \(a\) and \(b\) exist by the previous conditions).
\end{itemize}

The set \(S\) is infinite by Schinzel's result.

Note that \(n\) is odd, so \((-\zeta_{n})^{n} = (-1)^{n} = -1\), so \(-1\) is an \(n\)th power in \(\mathbb{Q}(\zeta_{n})\), so every \(r \in S\) satisfies \(\left(\frac{-1}{r}\right)_{n} = 1\).

It follows that for every \(r \in S\), the number \(\left(\frac{B}{r}\right)_{n}\) is a primitive \(n\)th root of unity, but we have \(\left(\frac{C}{r}\right)_{n} = 1\) and therefore \(\left(\frac{C^{k}}{r}\right)_{n} = 1\) for all \(k \in \mathbb{Z}\), so for every \(k \in \mathbb{Z}\), we have \(C^{k} \not\equiv B\) (mod \(r\)). This completes the proof of Theorem \ref{thmain}.

\section{Corollary: Some Degenerate Cubic Cases}

Theorem \ref{thmain} easily implies some degenerate cubic cases of the exponential local-global principle.

\begin{corollary}
Let \(c_{1}\) and \(c_{2}\) be nonzero rational numbers such that \(c_{1} \notin \{c_{2}, -c_{2}\}\). Let \(b_{1}\), \(b_{2}\), and \(b_{3}\) be nonzero rational numbers.

Consider the sequence \(u = (u_{n})_{n = -\infty}^{\infty}\) such that for \(n \in \mathbb{Z}\),
\[u_{n} = b_{1}c_{1}^{n} + (b_{2} + (-1)^{n}b_{3})c_{2}^{n}.\]
(Note that if we let the rational numbers \(a_{1}\), \(a_{2}\), and \(a_{3}\) be such that
\[(x - c_{1})(x - c_{2})(x + c_{2}) = x^{3} - a_{1}x^{2} - a_{2}x - a_{3}\]
identically, then
\[u_{n + 3} = a_{1}u_{n + 2} + a_{2}u_{n + 1} + a_{3}u_{n}\]
for all \(n \in \mathbb{Z}\).)

Let \(C = c_{1}/c_{2}\), \(B_{+} = -(b_{2} + b_{3})/b_{1}\), and \(B_{-} = -(b_{2} - b_{3})/b_{1}\).

The exponential local-global principle holds for \(u\) if, for each \(B \in \{B_{+}, B_{-}\}\), at least one of the following is true.
\begin{itemize}
\item[(1)] We have \((C, B) = (1, -1)\).
\item[(2)] We have \(B^{k}C^{\ell} \notin \{\pm 1\}\) for all \(k \in \mathbb{Z} - \{0\}\) and all \(\ell \in \mathbb{Z}\).
\end{itemize}
\end{corollary}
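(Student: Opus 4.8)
The plan is to reduce the corollary to the quadratic Theorem~\ref{thmain} by splitting $u$ according to the parity of the index. The direction ``arithmetic condition $\Rightarrow$ no vanishing term'' is immediate (any valid $m$ must avoid $0$), so assume no term of $u$ is zero. For $j\in\mathbb{Z}$ set
\[v_{j}=u_{2j}=b_{1}(c_{1}^{2})^{j}+(b_{2}+b_{3})(c_{2}^{2})^{j},\qquad w_{j}=u_{2j+1}=b_{1}c_{1}(c_{1}^{2})^{j}+(b_{2}-b_{3})c_{2}(c_{2}^{2})^{j}.\]
Each of $v,w$ obeys the binary recurrence with characteristic polynomial $(x-c_{1}^{2})(x-c_{2}^{2})$, whose discriminant $(c_{1}^{2}-c_{2}^{2})^{2}$ is nonzero precisely because $c_{1}\notin\{c_{2},-c_{2}\}$; in the notation preceding Theorem~\ref{thmain} the ratio is $C^{2}$ for both subsequences, while the $B$-parameter is $B_{+}$ for $v$ and $B_{-}/C$ for $w$. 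If $b_{2}+b_{3}=0$ then $v$ is geometric and the $d=1$ Proposition handles it (symmetrically if $b_{2}-b_{3}=0$); otherwise $v$, respectively $w$, is a genuine recurrence of the type covered by Theorem~\ref{thmain}, with no vanishing term since $u$ has none.

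The next step is to see that the corollary's hypotheses on $B_{+},B_{-}$ are exactly what Theorem~\ref{thmain} needs for $v,w$. If $(C,B_{+})=(1,-1)$ then $(C^{2},B_{+})=(1,-1)$, which is case~(2) of Theorem~\ref{thmain} for $v$; likewise $(C,B_{-})=(1,-1)$ gives $(C^{2},B_{-}/C)=(1,-1)$ for $w$. If instead $B_{+}^{k}C^{\ell}\notin\{\pm1\}$ for all $k\in\mathbb{Z}-\{0\}$ and all $\ell$, then $B_{+}^{k}(C^{2})^{\ell}=B_{+}^{k}C^{2\ell}\notin\{\pm1\}$, which is case~(3) for $v$, and $(B_{-}/C)^{k}(C^{2})^{\ell}=B_{-}^{k}C^{2\ell-k}\notin\{\pm1\}$, which is case~(3) for $w$; passing from $C$ to $C^{2}$ (and twisting $B_{-}$ by $C^{-1}$) only restricts the set of exponent pairs at which non-triviality is demanded, so it survives.

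The genuine obstacle is that a prime $p$ misses every term of $u$ precisely when it misses every term of both $v$ and $w$, whereas applying Theorem~\ref{thmain} to $v$ and to $w$ separately yields two infinite sets of primes that need not overlap. I would therefore not invoke Theorem~\ref{thmain} as a black box but rerun the argument of its case~(3) once, for $v$ and $w$ together. For a prime $p$ coprime to $b$ and to each of $c_{1},c_{2},b_{1},b_{2}+b_{3},b_{2}-b_{3}$ one has $p\nmid u_{n}$ for all $n$ as soon as $(C^{2})^{j}\not\equiv B_{+}$ and $C(C^{2})^{j}\not\equiv B_{-}\pmod{p}$ for all $j$; working with a single auxiliary prime $n$ and the $n$th power residue symbol for $\mathbb{Q}(\zeta_{n})$, it suffices to secure $\left(\tfrac{C}{p}\right)_{n}=1$ together with $\left(\tfrac{B_{+}}{p}\right)_{n}\neq1$ and $\left(\tfrac{B_{-}}{p}\right)_{n}\neq1$ (note $n$ odd gives $\left(\tfrac{-1}{p}\right)_{n}=1$, and $\left(\tfrac{C^{2}}{p}\right)_{n}=1$ forces $\left(\tfrac{C}{p}\right)_{n}=1$). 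Schinzel's theorem, applied to the distinct primes dividing $bc_{1}c_{2}b_{1}(b_{2}+b_{3})(b_{2}-b_{3})$, supplies infinitely many $p$ realising any prescribed values of the symbols $\left(\tfrac{q}{p}\right)_{n}$ at those primes $q$, and prescribing them amounts to choosing an exponent vector over $\mathbb{F}_{n}$, indexed by those primes, that lies on the hyperplane $\sum_{q}e_{q}v_{q}(C)\equiv0$ but on neither of the two hyperplanes $\sum_{q}e_{q}v_{q}(B_{+})\equiv0$ and $\sum_{q}e_{q}v_{q}(B_{-})\equiv0$. Such a vector exists because a vector space over a field is never a union of two proper subspaces, provided the first hyperplane is contained in neither of the other two.

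I expect that last proviso to be the technical heart. It holds once $n$ is chosen so that neither $B_{+}$ nor $B_{-}$ has the form $C^{\lambda}r^{n}$ with $\lambda\in\mathbb{Z}$ and $r\in\mathbb{Q}^{\times}$: the hypothesis $B^{k}C^{\ell}\notin\{\pm1\}$ for $k\neq0$ makes the image of $B$ have infinite order in the cyclic quotient $\langle B,C,-1\rangle/\langle C,-1\rangle$, and passing to the saturation of $\langle B,C,-1\rangle$ in $\mathbb{Q}^{\times}$ shows that $B\in\langle C,(\mathbb{Q}^{\times})^{n}\rangle$ can occur only when $n$ divides a fixed nonzero vector in a finitely generated free abelian group, hence for only finitely many primes $n$; discarding those, and $n=2$, leaves a valid choice of $n$. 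Finally, whenever $B_{+}=0$ or $(C,B_{+})=(1,-1)$ — and symmetrically for $B_{-}$ — the corresponding subsequence places no constraint on $p$ beyond the coprimality already imposed, so the construction still runs, and taking $m=p$ for any one of the resulting primes completes the proof.
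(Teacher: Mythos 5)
Your proposal is correct, and its skeleton — split $u$ into the even- and odd-indexed subsequences $v_j=u_{2j}$ and $w_j=u_{2j+1}$, compute that their parameters are $(C^2,B_+)$ and $(C^2,B_-/C)$, and check that the corollary's hypotheses transfer to hypotheses of Theorem~\ref{thmain} for each — is exactly the paper's proof, which consists of the single sentence ``Apply Theorem~\ref{thmain} to the sequences $(u_{2n})$ and $(u_{2n+1})$.'' Where you diverge is in the ``genuine obstacle'' you identify: that the two applications of Theorem~\ref{thmain} produce two infinite sets of primes that need not intersect. That is not actually an obstacle for the statement being proved, because the exponential local-global principle only asks for \emph{some} integer $m\geqslant 2$ coprime to $b$, not a prime one: taking $p$ from the set for $v$ and $q$ from the set for $w$ and setting $m=pq$, every even-indexed term avoids $p\mathbb{Z}[1/b]\supseteq pq\mathbb{Z}[1/b]$ and every odd-indexed term avoids $q\mathbb{Z}[1/b]\supseteq pq\mathbb{Z}[1/b]$, so the black-box application suffices. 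Your resolution — rerunning the Schinzel argument once with the three valuation vectors $v_\bullet(C)$, $v_\bullet(B_+)$, $v_\bullet(B_-)$, choosing $n$ away from the two nonzero $2\times2$ minors so that neither $B_\pm$-hyperplane contains the $C$-hyperplane, and using the fact that a vector space is never a union of two proper subspaces — is sound and buys something the paper's one-liner does not: infinitely many \emph{prime} moduli each of which works for all of $u$ simultaneously, i.e.\ the strengthened conclusion of Theorem~\ref{thmain} for the cubic sequence itself. You are also more careful than the paper about the degenerate sub-cases $b_2\pm b_3=0$, where one subsequence is geometric and must be handled by the $d=1$ proposition rather than by Theorem~\ref{thmain} (whose standing hypotheses require both coefficients nonzero). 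One small remark: since the corollary assumes $c_1\notin\{c_2,-c_2\}$, we have $C\neq\pm1$, so its condition (1) is vacuous and $v_\bullet(C)\neq 0$ automatically; this slightly simplifies your case analysis for the choice of $n$.
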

\begin{proof}
Apply Theorem \ref{thmain} to the sequences \((u_{2n})_{n = -\infty}^{\infty}\) and \((u_{2n + 1})_{n = -\infty}^{\infty}\).
\end{proof}

\section*{Acknowledgments}

Many thanks to Eder Kikianty and James Raftery for their mentorship and support for this project.

Many thanks to Mapundi Banda, the head of the Department of Mathematics and Applied Mathematics at the University of Pretoria, for their support.

This project was awarded funding from the University of Pretoria's Research Development Programme. Many thanks to everyone at the University of Pretoria for their support.

\end{document}